\renewcommand{\mathcal}{\mathscr}
\newcommand{\dd}{\mathrm{d}}
\renewcommand{\Tilde}{\widetilde}
\renewcommand{\Bar}{\overline}
\newcommand{\RR}{\mathbb{R}}
\newcommand{\CC}{\mathbb{C}}
\newcommand{\NN}{\mathbb{N}}
\newcommand{\cO}{\mathscr{O}}
\newcommand{\cH}{\mathcal{H}}
\newtheorem{theorem}{Theorem}
\newtheorem{prop}[theorem]{Proposition}
\newtheorem{lemma}[theorem]{Lemma}
\theoremstyle{definition}
\DeclareMathOperator{\spec}{spec}
\DeclareMathOperator{\dom}{\mathscr{D}}
\begin{document}

\title[]{On the Robin eigenvalues of the Laplacian\\ in the exterior of~a~convex polygon}

\author{Konstantin Pankrashkin}

\address{Laboratoire de math\'ematiques (UMR 8628), Universit\'e Paris-Sud,
B\^atiment 425, 91405 Orsay Cedex, France}

\email{konstantin.pankrashkin@math.u-psud.fr}
\urladdr{http://www.math.u-psud.fr/~pankrash/}

\begin{abstract}
Let $\Omega\subset \mathbb{R}^2$ be the exterior of a convex polygon whose side lengths are $\ell_1,\dots,\ell_M$.
For  $\alpha>0$, let $H^\Omega_\alpha$ denote the Laplacian in $\Omega$, $u\mapsto -\Delta u$,
with the Robin boundary conditions $\partial u/\partial\nu =\alpha u$,
where $\nu$ is the exterior unit normal at the boundary of~$\Omega$.
We show that, for any fixed $m\in\mathbb{N}$, the $m$th eigenvalue $E^\Omega_m(\alpha)$
of $H^\Omega_\alpha$ behaves as
\[
E^\Omega_m(\alpha)=-\alpha^2+\mu^D_m +\mathcal{O}\Big(\dfrac{1}{\sqrt\alpha}\Big)
\quad \mbox{as $\alpha$ tends to $+\infty$},
\]
where $\mu^D_m$ stands for the $m$th eigenvalue of the operator $D_1\oplus\dots\oplus D_M$ and
$D_n$ denotes the one-dimensional Laplacian $f\mapsto -f''$ on $(0,\ell_n)$ with the Dirichlet boundary conditions.

\bigskip

%\noindent {\sc Keywords:} eigenvalue asymptotics, Laplacian, Robin boundary condition, Dirichlet boundary condition

%\bigskip

%\noindent {\sc PACS:} 41.20.Cv, 02.30.Jr, 02.30.Tb

\end{abstract}

\maketitle

\section{Introduction}

\subsection{Laplacian with Robin boundary conditions}

Let $\Omega\subset \RR^d$, $d\ge2$, be a connected domain with a compact Lipschitz
boundary~$\partial\Omega$.. For $\alpha>0$,
let $H^\Omega_\alpha$ denote the Laplacian $u\mapsto-\Delta u$ in $\Omega$ with the Robin boundary conditions
$\partial u/\partial \nu=\alpha u$  at~$\partial\Omega$,
where $\nu$ stands for the outer unit normal. More precisely, $H_\alpha^\Omega$ is the self-adjoint operator in $L^2(\Omega)$ generated by the sesquilinear form
\[
h^\Omega_\alpha (u,u)=\iint_\Omega |\nabla u|^2\, \dd x -\alpha \int_{\partial\Omega}
|u|^2\,\dd\sigma,
\quad \dom (h^\Omega_\alpha)=W^{1,2}(\Omega).
\]
Here and below, $\sigma$ denotes the $(d-1)$-dimensional Hausdorff measure.

One checks in the standard way that the operator $H^\Omega_\alpha$
is semibounded from below. If $\Omega$ is bounded (i.e. $\Omega$
is an interior domain), then it has a compact resolvent, and we denote by $E^\Omega_m(\beta)$, $m\in\NN$, its eigenvalues taken according to their multiplicities
and enumerated in the non-decreasing order.
If $\Omega$ is unbounded (i.e. $\Omega$ is an exterior domain),
then the essential spectrum of $H^\Omega_\alpha$
coincides with $[0,+\infty)$, and the discrete spectrum consists of finitely many 
eigenvalues which will be denoted again
by $E^\Omega_m(\alpha)$, $m\in\{1,\dots,K_\alpha\}$,
and enumerated in the non-decreasing order taking into account the multiplicities.

We are interested in the behavior of the eigenvalues $E^\Omega_m(\alpha)$
for large $\alpha$. It seems that the problem was introduced
by Lacey, Ockedon, Sabina~\cite{lacey} when studying a reaction-diffusion system.
Giorgi and Smits~\cite{gs} studied a link to the theory of enhanced surface superconductivity.
Recently, Freitas and Krej\v ci\v r\'\i k~\cite{FK} and then Pankrashkin and Popoff~\cite{pp14}
studied the eigenvalue asymptotics in the context of the spectral optimization.

Let us list some available results. Under various assumptions
one showed the asymptotics of the form
\begin{equation}
   \label{eq-eig}
E^\Omega_m(\alpha)=-C_\Omega\alpha^2+o(\alpha^2)	\text{ as $\alpha$ tends to $+\infty$,}
\end{equation}
where $C_\Omega\ge 1$ is a constant depending on the geometric properties of~$\Omega$.
Lacey, Ockedon, Sabina~\cite{lacey} showed~\eqref{eq-eig} with $m=1$
for $C^4$ compact domains, for which $C_\Omega=1$,
and for triangles, for which $C_\Omega=2/(1-\cos\theta)$, where
$\theta$ is the smallest corner.
Lu and Zhu~\cite{luzhu} showed~\eqref{eq-eig} with $m=1$ and $C_\Omega=1$
for compact $C^1$ smooth domains, and
Daners and Kennedy~\cite{dan1} extended the result to any fixed $m\in\NN$.
Levitin and Parnovski~\cite{lp} showed \eqref{eq-eig} with $m=1$
for domains with piecewise smooth compact Lipschitz boundaries.
They proved, in particular, that if $\Omega$ is a curvilinear polygon
whose smallest corner is~$\theta$, then for $\theta<\pi$ there holds
$C_\Omega=2/(1-\cos\theta)$, otherwise $C_\Omega=1$.
Pankrashkin~\cite{kp13}
considered two-dimensional domains with a piecewise
$C^4$ smooth compact boundary and without convex corners, and it was shown
that $E^\Omega_1(\alpha)=-\alpha^2-\gamma \alpha+\cO(\alpha^{2/3})$, where
$\gamma$ is the maximum of the signed curvature at the boundary.
Exner, Minakov, Parnovski~\cite{emp14} showed that for compact $C^4$
smooth domains the same asymptotics $E^\Omega_m(\alpha)=-\alpha^2-\gamma \alpha+\cO(\alpha^{2/3})$ holds for any fixed $m\in\NN$. Similar results were obtained by Exner and Minakov~\cite{em}
for a class of two-dimensional domains with non-compact boundaries
and by Pankrashkin and Popoff~\cite{pp14} for $C^3$ compact
domains in arbitrary dimensions. Cakoni, Chaulet, Haddar~\cite{cch} studied the asymptotic
behavior of higher eigenvalues.

\subsection{Problem setting and the main result}

The computation of further terms in the eigenvalue asymptotics
needs more precise geometric assumptions. To our knowledge, such results
are available for the two-dimensional case only.
Helffer and Pankrashkin~\cite{HP} studied the tunneling effect for the
eigenvalues of a specific domain with two equal corners, and
Helffer and Kachmar~\cite{HK} considered the domains whose boundary curvature
has a unique non-degenerate maximum. The machinery of the both papers
is based on the asymptotic properties of the eigenfunctions:
it was shown that the eigenfunctions corresponding to the lowest eigenvalues
concentrate near the smallest convex corner at the boundary
or, if no convex corners are present, near the point
of the maximum curvature, and this is used to obtain
the corresponding eigenvalue asymptotics.

The aim of the present note is to consider a new class of two-dimensional domains~$\Omega$.
Namely, our assumption is as follows:
\[
\text{The domain $\RR^2\setminus\overline\Omega$
is a convex polygon (with straight edges).}
\]
Such domains are not covered by the above cited works: all the corners are non-convex,
and the curvature is constant on the smooth part of the boundary,
and it is not clear how the eigenfunctions are concentrated along the boundary.
We hope that our result will be of use  for the understanding of the role
of~non-convex corners.

In order to formulate the main result we need some notation.
Denote the vertices of the polygon $\RR^2\setminus\overline\Omega$
by $A_1,\dots, A_M\in\RR^2$, $M\ge 3$, and assume
that they are enumerated is such a way 
that the boundary $\partial\Omega$ is the union of the $M$ line segments
$L_n:=[A_n,A_{n+1}]$, $n\in\{1,\dots, M\}$, where we denote
$A_{M+1}:=A_1$, $A_{0}:=A_M$.
It is also assumed that there are no artificial vertices, i.e. that
$A_n\notin[A_{n-1},A_{n+1}]$ for any $n\in\{1,\dots,M\}$.

Furthermore, we denote by $\ell_n$ the length of the side $L_n$, 
and by $D_n$ the Dirichlet Laplacian $f\mapsto-f''$
on $(0,\ell_n)$ viewed as a self-adjoint operator in $L^2(0,\ell_n)$.
The main result of the present note is as follows:

\begin{theorem}\label{thm1}
For any fixed $m\in\mathbb{N}$ there holds
\[
E^\Omega_m(\alpha)=-\alpha^2+\mu^D_m +\cO\Big(\dfrac{1}{\sqrt\alpha}\Big) \text{ as $\alpha$ tends to $+\infty$,}
\]
where $\mu_m^D$ is the $m$th eigenvalue of the operator $D_1\oplus\dots\oplus D_M$.
\end{theorem}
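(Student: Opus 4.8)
The plan is to sandwich $E^\Omega_m(\alpha)$ between $-\alpha^2+\mu^D_m$ plus and minus an error $\cO(\alpha^{-1/2})$, exploiting that the edges are straight: in the tubular coordinates $(s,t)$ along an edge (with $s$ arc length and $t$ the distance to that edge) the Laplacian is \emph{exactly} $-\partial_s^2-\partial_t^2$, with no curvature or Jacobian correction. The transverse operator $-\partial_t^2$ on $(0,\infty)$ with the Robin condition $f'(0)=-\alpha f(0)$ has the single negative eigenvalue $-\alpha^2$, normalised ground state $\psi_\alpha(t)=\sqrt{2\alpha}\,e^{-\alpha t}$, and a spectral gap up to $0$. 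Separating variables along a fixed edge therefore produces $-\partial_s^2-\alpha^2$ on $(0,\ell_n)$, and the entire content of the theorem is that the non-convex vertices impose Dirichlet conditions at $s=0$ and $s=\ell_n$ as $\alpha\to+\infty$.

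For the upper bound I would use the min-max principle. For each edge $L_n$ and each $k$ take the $k$th Dirichlet eigenfunction $\varphi^{(n)}_k$ of $D_n$, with eigenvalue $\mu^D_{n,k}=(k\pi/\ell_n)^2$, and set $u^{(n)}_k(s,t)=\chi(t)\,\psi_\alpha(t)\,\varphi^{(n)}_k(s)$, where $\chi$ is a fixed cut-off in the normal variable. Since $\varphi^{(n)}_k$ vanishes at the vertices, after an additional cut-off of width $\cO(\alpha^{-1/2})$ near each vertex these functions have essentially disjoint supports, are almost orthogonal, and lie in $W^{1,2}(\Omega)$. The flatness of the edges gives $h^\Omega_\alpha(u^{(n)}_k,u^{(n)}_k)=(-\alpha^2+\mu^{D}_{n,k})\|u^{(n)}_k\|^2$ up to the vertex-truncation error $\cO(\alpha^{-1/2})$ and the exponentially small error from $\chi$. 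Feeding the span of these functions into min-max yields $E^\Omega_m(\alpha)\le-\alpha^2+\mu^D_m+\cO(\alpha^{-1/2})$.

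The lower bound is the hard direction, and the naive Neumann bracketing that cuts each edge away from its endpoints fails: a Neumann cut leaves the constant tangential mode, producing spurious eigenvalues near $-\alpha^2$ governed by \emph{Neumann} rather than Dirichlet tangential eigenvalues. The remedy is to cut \emph{through} the vertices instead of between an edge and a vertex. Concretely, I would split $\Omega$ by Neumann cuts along the angle bisectors of the exterior wedges at the vertices, together with a Neumann cut at normal distance $\delta$, so that each piece $P_n$ consists of one full edge strip carrying small flaps around the two adjacent reflex corners. Neumann bracketing then gives $E^\Omega_m(\alpha)\ge\lambda_m\big(\bigoplus_n H^{P_n}\big)$, and it remains to show that the low-lying spectrum of each $H^{P_n}$ is $-\alpha^2+\mu^{D}_{n,k}+\cO(\alpha^{-1/2})$.

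This reduces the theorem to a single geometric lemma about a reflex corner, which I expect to be the main obstacle, and it is here that the convexity of $\RR^2\setminus\overline\Omega$ is essential: at a vertex the interior angle of $\Omega$ equals $\omega_n=2\pi-\theta_n>\pi$, so the set of points whose nearest boundary point is the vertex itself is a genuine sector of opening $\omega_n-\pi>0$ touching \emph{neither} edge. On this sector $h^\Omega_\alpha$ carries no negative boundary term, hence is nonnegative and lies well above the window near $-\alpha^2$; it acts as a barrier separating the two edges (for a convex corner, by contrast, the wedge model has a bound state below $-\alpha^2$, which would destroy the asymptotics). Quantitatively, projecting $u\uhr P_n$ onto $\psi_\alpha$ on the strip and estimating the harmonic extension of the resulting boundary data into the flap, one finds that a tangential profile $\phi$ with $\phi(A_n)\ne0$ costs energy of order $\alpha\,|\phi(A_n)|^2$: the data has height $\sim\sqrt\alpha\,\phi(A_n)$ concentrated on the scale $\alpha^{-1}$, and Dirichlet energy is scale invariant in two dimensions. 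Thus the effective one-dimensional form on $P_n$ is $\int_0^{\ell_n}|\phi'|^2\,\dd s$ plus a penalty of size $\sim\alpha\,(|\phi(0)|^2+|\phi(\ell_n)|^2)$, which forces $\phi$ to the Dirichlet values up to $\cO(\alpha^{-1/2})$ and converts the spectrum of $-\partial_s^2$ into that of $D_n$ with error $\cO(\alpha^{-1/2})$. The delicate points I anticipate are making the transverse projection rigorous near the singular corner, where the distance function $t$ fails to be smooth, and turning the heuristic barrier estimate into a clean lower bound with the stated error.
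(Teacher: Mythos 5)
Your strategy is viable and, modulo the points you flag yourself, it would prove the theorem; but it is organized rather differently from the paper. The paper obtains the upper bound for free from Dirichlet bracketing along the half-lines orthogonal to the edges at the vertices: with Dirichlet conditions on those cuts the operator on each half-strip separates exactly into $D_n\otimes 1+1\otimes T_\alpha$, giving $E^\Omega_m(\alpha)\le-\alpha^2+\mu^D_m$ with no error term at all --- your vertex cut-offs are unnecessary, since the Dirichlet eigenfunctions already vanish at the endpoints, so the product of the normalized transverse ground state with $\varphi^{(n)}_k$, extended by zero outside the half-strip, is already in $W^{1,2}(\Omega)$. For the lower bound the paper does not attempt a bracketing through the corners followed by a piece-by-piece analysis; it uses the naive orthogonal-cut Neumann bracketing only to get the rough bound $E^\Omega_m(\alpha)=-\alpha^2+\cO(1)$, and then upgrades it with an Exner--Post identification map sending $u$ to the transverse ground-state projections on the edges, corrected near the endpoints so as to land in $\bigoplus_n W^{1,2}_0(0,\ell_n)$. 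Your reflex-corner ``barrier'' is exactly the paper's observation that the sector between the two orthogonal half-lines at a vertex meets $\partial\Omega$ only at that vertex, so the quadratic form there carries no Robin term and is nonnegative. The one place where your sketch is genuinely incomplete is the quantitative corner estimate: the harmonic-extension/scaling heuristic computes the cost of extending the \emph{specific} profile $\sqrt{2\alpha}e^{-\alpha t}\phi(A_n)$, i.e.\ an upper bound for the minimal extension, whereas what the lower bound requires is that \emph{every} $u$ whose transverse ground-state component at the cut equals $\phi(A_n)$ pays energy of order at least $\alpha\,|\phi(A_n)|^2$. This is supplied by the Levitin--Parnovski trace inequality on an infinite sector, $\int_{\partial\Lambda}|u|^2\,\dd\sigma\le \varepsilon\iint_\Lambda|\nabla u|^2\,\dd x+C_\theta\varepsilon^{-1}\iint_\Lambda|u|^2\,\dd x$ with $\varepsilon=\alpha^{-1}$, combined with the Cauchy--Schwarz bound $|\phi(A_n)|^2\le\int_{S}|u|^2\,\dd\sigma$ on the cut; with that lemma in hand your per-piece analysis closes, and the constant stays under control because the flap openings are bounded away from $0$ by convexity. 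Your other anticipated difficulty, the non-smoothness of the distance function at the reflex corner, is a non-issue: one defines the transverse projection only over the open edge, in the Cartesian coordinates of the half-strip, and never inside the corner sectors.
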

The proof is based on the machinery proposed by Exner and Post~\cite{ep}
to study the convergence on graph-like manifolds.
Actually our construction appears to be quite similar to
that of Post~\cite{post} used to study decoupled waveguides.

We remark that due to the presence of non-convex corners
the domain of the operator $H^\Omega_\alpha$
contains singular functions and is not included
in~$W^{2,2}(\Omega)$, see e.g. Grisvard~\cite{grisv}.
This does not produce any difficulties as our approach
is purely variational and is
entirely based on the analysis of the sesqulinear form.

\section{Preliminaries}

\subsection{Auxiliary operators}

For $\alpha>0$, denote by $T_\alpha$ the following self-adjoint
operator in $L^2(\RR_+)$:
\[
T_\alpha v=-v'', \quad
\dom( T_\alpha)=\big\{
v\in W^{2,2}(\RR_+): \, v'(0)+\alpha v(0)=0
\big\}.
\]
It is well known that
\begin{equation}
     \label{eq-phi}
\spec T_\alpha=\{-\alpha^2\}\cup[0,+\infty),
\quad
\ker(T+\alpha^2)=\CC \varphi_\alpha,
\quad
\varphi_\alpha(s):=\dfrac{e^{-\alpha s}}{\sqrt{2\alpha}}.
\end{equation}
The sesqulinear form $t_\alpha$ for the operator $T_\alpha$ looks as follows:
\[
t_\alpha(v,v)=\int_0^\infty \big|v'(s)\big|^2\dd s-\alpha \big|v(0)\big|^2,
\quad \dom (t_\alpha)=W^{1,2}(\RR_+).
\]

\begin{lemma}\label{lem2}
For any $v\in W^{1,2}(\RR_+)$ there holds
\[
\int_{0}^\infty \big|v(s)\big|^2\dd s
- \bigg|\int_{0}^\infty \varphi_\alpha(s)v(s)\dd s\bigg|^2\\
\le
\dfrac{1}{\alpha^2} \bigg(
\int_0^\infty \big|v'(s)\big|^2\dd s -\alpha \big|v(0)\big|^2
+\alpha^2 \int_{0}^\infty \big|v(s)\big|^2\dd s
\bigg).
\]
\end{lemma}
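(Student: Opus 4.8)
The plan is to recognise both sides of the asserted inequality as spectral quantities attached to the operator $T_\alpha$ and then to read the estimate off the spectral picture recorded in~\eqref{eq-phi}. Throughout I use that the form $t_\alpha$ is closed, represents $T_\alpha$, and has $\dom(t_\alpha)=W^{1,2}(\RR_+)$, and that $T_\alpha+\alpha^2$ is non-negative. First I would rewrite the right-hand side: for every $v\in W^{1,2}(\RR_+)$ the parenthesised expression is
\[
\int_0^\infty\big|v'(s)\big|^2\dd s-\alpha\big|v(0)\big|^2+\alpha^2\int_0^\infty\big|v(s)\big|^2\dd s=t_\alpha(v,v)+\alpha^2\|v\|^2,
\]
which is the quadratic form of $T_\alpha+\alpha^2$ evaluated on its form domain. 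For the left-hand side I would use that, by~\eqref{eq-phi}, $\varphi_\alpha$ spans $\ker(T_\alpha+\alpha^2)$; taking it normalised, the orthogonal projection onto that eigenspace is $P=\langle\varphi_\alpha,\cdot\,\rangle\,\varphi_\alpha$ and
\[
\int_0^\infty\big|v(s)\big|^2\dd s-\bigg|\int_0^\infty\varphi_\alpha(s)v(s)\dd s\bigg|^2=\|v\|^2-\|Pv\|^2=\big\|(I-P)v\big\|^2 .
\]
In these terms the lemma is exactly the form inequality $t_\alpha(v,v)+\alpha^2\|v\|^2\ge\alpha^2\|(I-P)v\|^2$, i.e. $T_\alpha+\alpha^2\ge\alpha^2(I-P)$ tested against $v$.

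To prove this I would appeal to the spectral theorem. Writing $\mu_v$ for the spectral measure of $T_\alpha$ associated with $v$, the description $\spec T_\alpha=\{-\alpha^2\}\cup[0,\infty)$ splits $\mu_v$ into its atom at $-\alpha^2$, of mass $\|Pv\|^2$, and a remainder supported on $[0,\infty)$. Since $T_\alpha$ is bounded below, $t_\alpha(v,v)=\int\lambda\,\dd\mu_v(\lambda)$ is meaningful for $v$ in the form domain, and the $[0,\infty)$-part contributes a non-negative amount, so that
\[
t_\alpha(v,v)=\int_{\{-\alpha^2\}\cup[0,\infty)}\lambda\,\dd\mu_v(\lambda)\ge-\alpha^2\|Pv\|^2 .
\]
Adding $\alpha^2\|v\|^2=\alpha^2\|Pv\|^2+\alpha^2\|(I-P)v\|^2$ to both sides gives precisely $t_\alpha(v,v)+\alpha^2\|v\|^2\ge\alpha^2\|(I-P)v\|^2$, which is the claim.

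The point requiring care, and which I regard as the main obstacle, is that $v$ ranges over the form domain $W^{1,2}(\RR_+)$ rather than over $\dom(T_\alpha)$, so the inequality must be handled at the level of forms and spectral measures rather than by applying $T_\alpha$ to $v$; this is exactly why I phrase everything through $t_\alpha$. Equivalently, one can proceed by the elementary decomposition $v=\langle\varphi_\alpha,v\rangle\varphi_\alpha+w$ with $w\perp\varphi_\alpha$: an integration by parts together with the boundary condition $\varphi_\alpha'(0)+\alpha\varphi_\alpha(0)=0$ and the eigenrelation $-\varphi_\alpha''=-\alpha^2\varphi_\alpha$ shows that $\varphi_\alpha$ lies in the kernel of the bilinear form $t_\alpha(\cdot,\cdot)+\alpha^2\langle\cdot,\cdot\rangle$, so the cross terms drop out and $t_\alpha(v,v)+\alpha^2\|v\|^2=t_\alpha(w,w)+\alpha^2\|w\|^2$; the estimate then reduces to the spectral-gap statement $t_\alpha(w,w)\ge0$ for $w\perp\varphi_\alpha$, which is once more \eqref{eq-phi}. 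A final routine verification is that one must take $\varphi_\alpha$ normalised, $\|\varphi_\alpha\|=1$, in order that $\big|\int_0^\infty\varphi_\alpha v\,\dd s\big|^2$ equal $\|Pv\|^2$.
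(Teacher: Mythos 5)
Your proof is correct and follows essentially the same route as the paper: both reduce the inequality to $t_\alpha(v,v)+\alpha^2\|Pv\|^2\ge 0$ via the orthogonal projection $P$ onto $\ker(T_\alpha+\alpha^2)$ and the spectral theorem applied at the level of the form $t_\alpha$. Your closing remark about needing $\|\varphi_\alpha\|=1$ is well taken, since the paper's proof explicitly invokes that normalization.
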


\begin{proof}
Denote by $P$ the orthogonal projector on
$\ker (T_\alpha+\alpha^2)$ in $L^2(\RR_+)$, then by the spectral theorem we have
\[
t_\alpha(v,v)+\alpha^2 \|Pv\|^2=t_\alpha(v-Pv,v-Pv)\ge 0
\]
for any $v\in \dom (t_\alpha)$. As $\varphi_\alpha$ is normalized, there holds
\[
\bigg|\int_{0}^\infty \varphi_\alpha(x)v(x)\dd x\bigg|=\|P v\|,
\]
and we arrive at the conclusion.
\end{proof}

Another important estimate is as follows, see Lemmas~2.6 and~2.8 in~\cite{lp}:
\begin{lemma}\label{lem-lp}
Let $\Lambda\subset\RR^2$ be an infinite sector of opening $\theta\in(0,2\pi)$, then
for any $\varepsilon>0$ and any function $v\in W^{1,2}(\Lambda)$ there holds
\begin{equation}
   \label{eq-cc} 
\int_{\partial \Lambda} |v|^2\dd s
\le \varepsilon
\iint_{\Lambda}|\nabla v|^2\dd x + \dfrac{C_\theta}{\varepsilon}
\iint_{\Lambda}|v|^2\dd x
\quad\text{with}\quad
C_\theta=\begin{cases}
\dfrac{2}{1-\cos\theta}, & \theta\in(0,\pi),\\[\medskipamount]
\,1, & \theta\in [\pi,2\pi).
\end{cases}
\end{equation}
\end{lemma}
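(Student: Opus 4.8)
The plan is to prove \eqref{eq-cc} by two complementary arguments: a slick one that gives the constant $2/(1-\cos\theta)$ for \emph{every} opening, and a refinement that improves it to $1$ once $\theta\ge\pi$. Throughout I would place the vertex of $\Lambda$ at the origin, take one edge $L_1$ along the positive $x$-axis, with outer normal $\nu_1=(0,-1)$, and the other edge $L_2$ along the ray of polar angle $\theta$, with outer normal $\nu_2=(-\sin\theta,\cos\theta)$; a direct check gives $\nu_1\cdot\nu_2=-\cos\theta$. By the density of $C^\infty(\Bar\Lambda)\cap W^{1,2}(\Lambda)$ and the trace theorem for the (unbounded) Lipschitz sector $\Lambda$, it suffices to prove \eqref{eq-cc} for functions that are smooth and decay at infinity, so that all the integrations by parts below are legitimate and the boundary contributions at infinity vanish.

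For the uniform bound I would use the divergence theorem with the \emph{constant} field $F:=\frac{1}{1-\cos\theta}(\nu_1+\nu_2)$. Since $\nabla(|v|^2)=2\Re(\Bar v\,\nabla v)$ and $F$ is constant, $\iint_\Lambda \operatorname{div}(|v|^2 F)\,\dd x=\int_{\partial\Lambda}|v|^2\,(F\cdot\nu)\,\dd s$ rewrites the boundary integral as $2\Re\iint_\Lambda \Bar v\,(\nabla v\cdot F)\,\dd x$. The point of this choice of $F$ is the identity $F\cdot\nu_1=F\cdot\nu_2=1$, which follows from $\nu_i\cdot\nu_i=1$ and $\nu_1\cdot\nu_2=-\cos\theta$; hence the left-hand side equals exactly $\int_{\partial\Lambda}|v|^2\,\dd s$. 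Bounding $2\Re(\Bar v\,\nabla v\cdot F)\le \frac1\delta|v|^2+\delta|F|^2|\nabla v|^2$ by Young's inequality and choosing $\delta=\varepsilon/|F|^2$ then yields \eqref{eq-cc} with $C_\theta=|F|^2$, and $|F|^2=\frac{1}{(1-\cos\theta)^2}\,|\nu_1+\nu_2|^2=\frac{2(1-\cos\theta)}{(1-\cos\theta)^2}=\frac{2}{1-\cos\theta}$. This settles the case $\theta\in(0,\pi)$.

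For $\theta\in[\pi,2\pi)$ the constant above is $\ge 1$ and must be sharpened, which I would do by a foliation argument. For $(x_0,0)\in L_1$ with $x_0>0$, the upward ray $\{(x_0,y):y\ge0\}$ stays in $\Lambda$, since its polar angles fill $[0,\pi/2)\subset[0,\theta)$; writing $|v(x_0,0)|^2=-\int_0^\infty\partial_y|v(x_0,y)|^2\,\dd y$, estimating by Young's inequality and integrating in $x_0$ gives $\int_{L_1}|v|^2\,\dd s\le \varepsilon\iint_{\Lambda_1}|\nabla v|^2\,\dd x+\frac1\varepsilon\iint_{\Lambda_1}|v|^2\,\dd x$, where $\Lambda_1=\{0<\phi<\pi/2\}$ is the quarter-sector swept by these rays. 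The reflection $\phi\mapsto\theta-\phi$ across the bisector exchanges $L_1,L_2$ and preserves $\Lambda$, hence produces the same estimate for $L_2$ over $\Lambda_2=\{\theta-\pi/2<\phi<\theta\}$. The decisive observation is that $\Lambda_1$ and $\Lambda_2$ are disjoint exactly when $\pi/2\le\theta-\pi/2$, i.e. $\theta\ge\pi$; adding the two estimates and using $\iint_{\Lambda_1}+\iint_{\Lambda_2}\le\iint_\Lambda$ then gives \eqref{eq-cc} with $C_\theta=1$.

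The part I expect to be most delicate is not the vector-field computation but the geometric bookkeeping in the second case: verifying that the two normal foliations sweep out precisely the quarter-sectors $\Lambda_1,\Lambda_2$ and that these are disjoint if and only if $\theta\ge\pi$. Alongside this, I would have to make the limiting terms rigorous, namely the fundamental theorem of calculus along almost every ray (equivalently $v\to0$ at infinity along the foliation lines) and the divergence theorem on the unbounded sector; both I would secure by first reducing to smooth, compactly supported $v$ via density and the trace theorem for Lipschitz domains.
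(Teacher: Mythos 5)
Your proof is correct, but there is nothing in the paper to match it against line by line: the paper does not prove Lemma~\ref{lem-lp} at all, it imports it verbatim from Levitin and Parnovski \cite[Lemmas~2.6 and~2.8]{lp}, so any self-contained argument is by definition a different route. Both halves of your argument check out. In the convex case, the constant field $F=(\nu_1+\nu_2)/(1-\cos\theta)$ indeed satisfies $F\cdot\nu_1=F\cdot\nu_2=1$ and $|F|^2=2(1-\cos\theta)/(1-\cos\theta)^2=2/(1-\cos\theta)$, so the divergence theorem plus Young's inequality gives exactly \eqref{eq-cc}; in the reflex case, the perpendicular ray foliations sweep the quarter-sectors $\{0<\phi<\pi/2\}$ and $\{\theta-\pi/2<\phi<\theta\}$, which are disjoint subsets of $\Lambda$ precisely when $\theta\ge\pi$, giving $C_\theta=1$. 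The technical reductions you flag are routine for a sector of opening $\theta<2\pi$ (a uniform Lipschitz domain): restrictions of $C_c^\infty(\RR^2)$ functions are dense in $W^{1,2}(\Lambda)$, and you can even avoid invoking an external trace theorem by applying your inequality to differences $v_n-v_m$ of approximants, which shows the boundary traces form a Cauchy sequence in $L^2(\partial\Lambda)$ and lets the estimate pass to the limit. One observation worth making: your foliation method alone, suitably tuned, proves the whole lemma — for $\theta<\pi$ integrate along rays parallel to the bisector; the change of variables costs a Jacobian factor $\sin(\theta/2)$, the swept half-sectors $\{0<\phi<\theta/2\}$ and $\{\theta/2<\phi<\theta\}$ are disjoint, and since $2\sin^2(\theta/2)=1-\cos\theta$ one recovers $C_\theta=1/\sin^2(\theta/2)=2/(1-\cos\theta)$. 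This unified foliation argument is essentially the classical proof of such sector trace estimates; your vector-field computation is a genuine alternative for the convex case, whose merit is that it produces the constant transparently with no Jacobian bookkeeping.
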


\subsection{Decomposition of $\Omega$}

Let us proceed with a decomposition of the domain $\Omega$
which will be used through the proof.  Let $n\in\{1,\dots,M\}$.
Denote by $S^1_n$ and $S^2_n$ the half-lines originating respectively
at $A_n$ and $A_{n+1}$, orthogonal to $L_n$ and contained in $\Omega$.
By $\Pi_n$ we denote the half-strip bounded by the half-lines $S^1_n$ and $S^2_n$
and the line segment $L_n$, and by $\Lambda_n$ we denote the infinite sector
bounded by the lines $S^2_{n-1}$ and $S^1_n$ and contained in $\Omega$.
The constructions are illustrated in~Figure~\ref{fig1}.
We note that the $2M$ sets $\Lambda_n$ and $\Pi_n$, $n\in\{1,\dots,M\}$, are
non-intersecting and that $\Bar \Omega=\bigcup_{n=1}^M\Bar\Lambda_n\mathop{\cup}\bigcup_{n=1}^M\Bar\Pi_n$. From Lemma~\ref{lem-lp} we deduce:
\begin{lemma}\label{lem4}
There exists a constant $C>0$ such that for any $\varepsilon>0$,
any $n\in \{1,\dots,M\}$ and any $v\in W^{1,2}(\Lambda_n)$ there holds
\[
\int_{\partial \Lambda_n} |v|^2\dd \sigma
\le C\varepsilon
\Big(
\iint_{\Lambda_n}|\nabla v|^2\dd x + \dfrac{1}{\varepsilon^2}
\iint_{\Lambda_n}|v|^2\dd x\Big).
\]
\end{lemma}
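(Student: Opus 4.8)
The plan is to deduce the uniform bound of Lemma~\ref{lem4} directly from Lemma~\ref{lem-lp}, the whole point being that the sectors $\Lambda_n$ are finite in number and each has a fixed opening. First I would identify the openings. Since $\RR^2\setminus\overline\Omega$ is a convex polygon, the interior angle $\beta_n$ at the vertex $A_n$ satisfies $\beta_n\in(0,\pi)$. The two half-lines $S^2_{n-1}$ and $S^1_n$ bounding $\Lambda_n$ both issue from $A_n$ and are the outer normals to the adjacent sides $L_{n-1}$ and $L_n$; an elementary computation shows that the angle between these normals, namely the opening $\theta_n$ of $\Lambda_n$, equals the exterior angle $\pi-\beta_n$, so that $\theta_n\in(0,\pi)$ for every $n$. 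In particular each $\theta_n$ is a fixed positive angle and the corresponding constant $C_{\theta_n}=2/(1-\cos\theta_n)$ from Lemma~\ref{lem-lp} is finite.

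Next I would apply Lemma~\ref{lem-lp} on each $\Lambda_n$. For a parameter $\delta>0$ and every $v\in W^{1,2}(\Lambda_n)$ it yields
\[
\int_{\partial\Lambda_n}|v|^2\,\dd\sigma
\le \delta\iint_{\Lambda_n}|\nabla v|^2\,\dd x
+\frac{C_{\theta_n}}{\delta}\iint_{\Lambda_n}|v|^2\,\dd x .
\]
As $n$ runs through the finite set $\{1,\dots,M\}$, the quantity $C_0:=\max_{1\le n\le M}C_{\theta_n}$ is finite and dominates every $C_{\theta_n}$. I would then set $C:=\sqrt{C_0}$ and choose $\delta:=C\varepsilon$. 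With this choice the gradient term acquires the coefficient $C\varepsilon$, while the coefficient of the $L^2$ term becomes $C_0/(C\varepsilon)=C/\varepsilon$, so that after rearranging,
\[
\int_{\partial\Lambda_n}|v|^2\,\dd\sigma
\le C\varepsilon\Big(\iint_{\Lambda_n}|\nabla v|^2\,\dd x+\frac{1}{\varepsilon^2}\iint_{\Lambda_n}|v|^2\,\dd x\Big),
\]
with $C$ independent of $n$ and of $\varepsilon$, which is exactly the claim.

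The argument is essentially routine, and I do not expect a genuine obstacle. The only substantive observations are that each opening $\theta_n$ is a fixed positive angle (so $C_{\theta_n}<\infty$) and that there are finitely many of them, which is what permits replacing the angle-dependent constants $C_{\theta_n}$ by the single constant $C$. The rescaling $\delta=C\varepsilon$, absorbing the worst $C_{\theta_n}$ symmetrically into both terms, is the small algebraic manipulation that converts the form of Lemma~\ref{lem-lp} into the form stated here. No regularity issue intervenes, since Lemma~\ref{lem-lp} already applies to arbitrary $W^{1,2}$ functions on each sector.
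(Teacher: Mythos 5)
Your proposal is correct and is exactly the argument the paper intends (the paper simply states that Lemma~\ref{lem4} follows from Lemma~\ref{lem-lp} without writing out the details): identify the opening of $\Lambda_n$ as the exterior angle $\pi-\beta_n\in(0,\pi)$, take the maximum of the finitely many constants $C_{\theta_n}$, and rescale the parameter to put both terms under a single constant. The computation $\theta_n=2\pi-\beta_n-\pi/2-\pi/2=\pi-\beta_n$ and the choice $\delta=C\varepsilon$ with $C=\sqrt{\max_n C_{\theta_n}}$ are both sound.
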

Furthermore, for each $n\in\{1,\dots,M\}$ denote by $\Theta_n$
the uniquely defined isometry $\RR^2\to\RR^2$
such that
\[
A_n=\Theta_n (0,0) \quad \text{and} \quad
\Pi_n=\Theta_n \big((0,\ell_n)\times\RR_+\big).
\]
We remark that due to the spectral properties of the above operator $T_\alpha$,
see~\eqref{eq-phi}, we have, for any $u\in W^{1,2}(\Pi_n)$,
\begin{multline*}
\int_0^{\ell_n}\int_0^\infty \Big|
\dfrac{\partial}{\partial s} u\big(\Theta_n(t,s)\big)\Big|^2 \dd s\, \dd t
-\alpha 
\int_0^{\ell_n} \Big|u\big(\Theta_n(t,s)\big)\Big|^2 \dd t
+\alpha^2
\int_0^{\ell_n}\int_0^\infty \Big|u\big(\Theta_n(t,s)\big)\Big|^2 \dd s \,\dd t\\
=\int_0^{\ell_n} \bigg(
\int_0^\infty\Big|\dfrac{\partial}{\partial s} u\big(\Theta_n(t,s)\big)\Big|^2 \dd s
-\alpha 
\Big|u\big(\Theta_n(t,0)\big)\Big|^2
+\alpha
\int_0^\infty\Big|u\big(\Theta_n(t,s)\big)\Big|^2 \dd s
\bigg)\dd t\ge0,
\end{multline*}
which implies, in particular,
\begin{multline}
             \label{eq-pin}
						0\le
\int_0^{\ell_n}\int_0^\infty \Big|
\dfrac{\partial}{\partial t} u\big(\Theta_n(t,s)\big)\Big|^2 \dd s\, \dd t\\
\begin{aligned}
\le\, & \int_0^{\ell_n}\int_0^\infty \Big|
\dfrac{\partial}{\partial t} u\big(\Theta_n(t,s)\big)\Big|^2 \dd s\, \dd t+
\int_0^{\ell_n}\int_0^\infty \Big|
\dfrac{\partial}{\partial s} u\big(\Theta_n(t,s)\big)\Big|^2 \dd s\, \dd t\\
&{}-\alpha 
\int_0^{\ell_n} \Big|u\big(\Theta_n(t,0)\big)\Big|^2 \dd t
+\alpha^2
\int_0^{\ell_n}\int_0^\infty \Big|u\big(\Theta_n(t,s)\big)\Big|^2 \dd s \,\dd t
\end{aligned}\\
=\iint_{\Pi_n} |\nabla u|^2\dd x -\alpha\int_{L_n} |u|^2\dd\sigma + \alpha^2 \iint_{\Pi_n}|u|^2\dd x.
\end{multline}

\begin{figure}
\centering
\includegraphics[height=50mm]{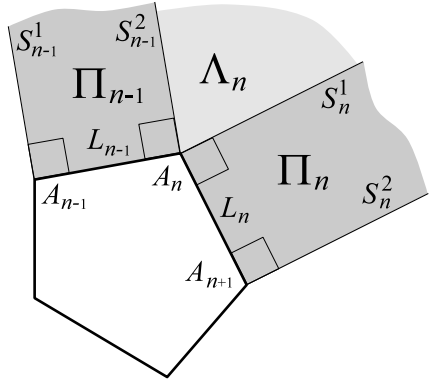}
\caption{Decomposition of the domain\label{fig1}}
\end{figure}

\subsection{Eigenvalues and identification maps}

We will use an eigenvalue estimate which is based on the max-min principle
and is just a suitable reformulation
of~Lemma~2.1 in~\cite{ep} or of~Lemma~2.2 in~\cite{post}:

\begin{prop}\label{prop6}
Let $B$ and $B'$ be non-negative self-adjoint operators
acting respectively in Hilbert space $\cH$ and $\cH'$ and
generated by sesqulinear form $b$ and $b'$.
Pick $m\in\NN$ and assume that
the operator $B$ has at least $m$ eigenvalues
$\lambda_1\le\dots\le\lambda_m<\inf\spec_\mathrm{ess} B$ and 
that the operator $B'$ has a compact resolvent.
If there exists a linear map  $J:\dom (b)\to\dom (b')$
(identification map) and two constants $\delta_1,\delta_2>0$ such that
$\delta_1\le (1+\lambda_m)^{-1}$ and that for any
$u\in\dom (b)$ there holds
\begin{align*}
\|u\|^2-\|Ju\|^2&\le \delta_1 \Big(  b(u,u)+\|u\|^2 \Big),\\
b'(Ju,Ju)-b(u,u)&\le \delta_2 \Big(  b(u,u)+\|u\|^2 \Big),
\end{align*}
then
\[
\lambda'_m\le \lambda_m + \dfrac{(\lambda_m\delta_1+\delta_2)(1+\lambda_m)}{1-(1+\lambda_m)\delta_1},
\]
where $\lambda'_m$ is the $m$th eigenvalue of the operator~$B'$.
\end{prop}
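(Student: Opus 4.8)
The plan is to apply the max-min (Courant--Fischer) principle to $B'$, using as trial space the image under $J$ of the spectral subspace of $B$ associated with the lowest $m$ eigenvalues. First I would dispose of a degenerate case: if $\delta_1=(1+\lambda_m)^{-1}$, the denominator in the claimed bound vanishes and the estimate is vacuous, so one may assume $\delta_1<(1+\lambda_m)^{-1}$. Let $\mathcal{L}\subset\dom(b)$ be the $m$-dimensional subspace spanned by eigenvectors of $B$ corresponding to $\lambda_1\le\dots\le\lambda_m$. Because these eigenvalues lie strictly below $\inf\spec_\mathrm{ess} B$, the subspace $\mathcal{L}$ is genuinely $m$-dimensional and, being spanned by eigenvectors, is contained in the form domain, and there one has the spectral bound $b(u,u)\le\lambda_m\|u\|^2$ for every $u\in\mathcal{L}$.

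Next I would convert the two hypotheses into a lower bound for $\|Ju\|$ and an upper bound for $b'(Ju,Ju)$ on $\mathcal{L}$. Combining the spectral bound with the first inequality gives
\[
\|Ju\|^2\ge\|u\|^2-\delta_1\big(b(u,u)+\|u\|^2\big)\ge\big(1-\delta_1(1+\lambda_m)\big)\|u\|^2,
\]
and the factor $1-\delta_1(1+\lambda_m)$ is strictly positive under the standing assumption. In particular $Ju=0$ forces $u=0$, so $J$ is injective on $\mathcal{L}$ and $J\mathcal{L}$ is an $m$-dimensional subspace of $\dom(b')$. Likewise the second inequality together with $b(u,u)\le\lambda_m\|u\|^2$ yields
\[
b'(Ju,Ju)\le b(u,u)+\delta_2\big(b(u,u)+\|u\|^2\big)\le\big(\lambda_m+\delta_2(1+\lambda_m)\big)\|u\|^2.
\]

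Dividing the two estimates, the Rayleigh quotient of $b'$ satisfies, uniformly for $0\ne u\in\mathcal{L}$,
\[
\frac{b'(Ju,Ju)}{\|Ju\|^2}\le\frac{\lambda_m+\delta_2(1+\lambda_m)}{1-\delta_1(1+\lambda_m)}.
\]
Since $B'$ has compact resolvent, its $m$th eigenvalue is furnished by the max-min principle, and testing on the $m$-dimensional trial space $J\mathcal{L}$ (using the bijection $u\mapsto Ju$ from $\mathcal{L}$ onto $J\mathcal{L}$) gives $\lambda'_m\le\max_{0\ne u\in\mathcal{L}}b'(Ju,Ju)/\|Ju\|^2$, so $\lambda'_m$ is bounded by the right-hand side above. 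A short rearrangement then rewrites this fraction as $\lambda_m+(\lambda_m\delta_1+\delta_2)(1+\lambda_m)/\big(1-(1+\lambda_m)\delta_1\big)$, which is exactly the claimed bound.

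I expect the only genuinely delicate point to be the bookkeeping ensuring that $J\mathcal{L}$ has full dimension $m$, so that it is admissible as a trial space: this hinges on the strict positivity of $1-\delta_1(1+\lambda_m)$, which is precisely where the hypothesis $\delta_1\le(1+\lambda_m)^{-1}$ is used (the boundary case being handled separately as a vacuous estimate). Everything else is a routine combination of the two form inequalities with the spectral bound $b(u,u)\le\lambda_m\|u\|^2$ on $\mathcal{L}$, followed by the final algebraic simplification of the quotient.
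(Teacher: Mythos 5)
Your proof is correct, and it is essentially the argument behind this statement: the paper gives no proof of Proposition~\ref{prop6}, deferring to Lemma~2.1 of Exner--Post \cite{ep} and Lemma~2.2 of Post \cite{post}, whose proofs proceed exactly as you do --- test the max-min principle for $B'$ on the image under $J$ of the span of the first $m$ eigenvectors of $B$, using the first hypothesis to get injectivity of $J$ on that span (hence an admissible $m$-dimensional trial space) and both hypotheses to bound the Rayleigh quotient, followed by the same algebraic rearrangement. Your handling of the boundary case $\delta_1=(1+\lambda_m)^{-1}$ as vacuous is also the right reading of the statement.
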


\section{Proof of Theorem~\ref{thm1}}

\subsection{Dirichlet-Neumann bracketing}

Consider the following sesqulinear form:
\begin{align*}
h^{\Omega,D}_\alpha(u,u)&=\sum_{n=1}^M
\iint_{\Lambda_n} |\nabla u|^2\dd x
+
\sum_{n=1}^M
\bigg(
\iint_{\Pi_n} |\nabla u|^2\dd x
-\alpha \int_{L_n} | u|^2\dd \sigma
\bigg),\\
\dom(h^{\Omega,D}_\alpha)&=\bigoplus_{n=1}^M W^{1,2}_0(\Lambda_n)\oplus
\bigoplus_{n=1}^M \Tilde W^{1,2}_0(\Pi_n),\\
\quad
\Tilde W^{1,2}_0(\Pi_n)&:=\big\{
f\in W^{1,2}(\Pi_n): f=0 \text{ at } S^1_n\mathop{\cup}S^2_n
\big\}.
\end{align*}
and denote by  $H^{\Omega,D}_\alpha$
the associated self-adjoint operator in $L^2(\Omega)$.
Clearly, the form  $h^{\Omega,D}_\alpha$ is a restriction of the initial form
$h^\Omega_\alpha$, and due to the max-min principle we have
\[
E_m^\Omega(\alpha)\le E_m^{\Omega,D}(\alpha),
\] where
$E_m^{\Omega,D}(\alpha)$ is the $m$th eigenvalue of $H^{\Omega,D}_\alpha$ (as soon at
it exists). On the other hand, we have the decomposition
\[
H^{\Omega,D}_\alpha= \bigoplus_{n=1}^M\big(-\Delta^D_n\big)\oplus 
\bigoplus_{n=1}^M G^D_{n,\alpha},
\]
where $(-\Delta^D_n)$ is the Dirichlet Laplacian in $L^2(\Lambda_n)$
and $G^D_{n,\alpha}$ is the self-adjoint operator
in $L^2(\Pi_n)$ generated by the sesquilinear form
\[
g^D_{n,\alpha}(u,u)=\iint_{\Pi_n} |\nabla u|^2\dd x -\alpha\int_{L_n}|u|^2 \dd \sigma,
\quad \dom (g^D_{n,\alpha})= \Tilde W^{1,2}_0(\Pi_n).
\]
Consider the following unitary maps:
\[
U_n:L^2(\Pi_n)\to L^2\big((0,\ell_n)\times\RR_+\big), \quad
U_n f:= f\circ \Theta_n,
\quad
n\in\{1,\dots,M\},
\]
then it is straightforward to check that
$U_n G^D_{n,\alpha} U^*_n= D_n \otimes 1 + 1\otimes T_\alpha$.
As the operators $(-\Delta^D_n)$ are non-negative,
it follows that 
$\spec_\text{ess} H^{\Omega,D}_\alpha=[0,+\infty)$
and that $E_m^{\Omega,D}(\alpha)=-\alpha^2+\mu^D_m$, which
gives the majoration
\begin{equation}
   \label{eq-minor}
	E_m^\Omega(\alpha)\le -\alpha^2+\mu^D_m
	\end{equation}
for all $m$ with $\mu^D_m< \alpha^2$.
In particular, the inequality \eqref{eq-minor} holds for any fixed $m$ as $\alpha$
tends to~$+\infty$.

Similarly, introduce the following sesquilinear form:
\begin{align*}
h^{\Omega,N}_\alpha(u,u)&=\sum_{n=1}^M
\iint_{\Lambda_n} |\nabla u|^2\dd x
+
\sum_{n=1}^M
\bigg(
\iint_{\Pi_n} |\nabla u|^2\dd x
-\alpha \int_{L_n} | u|^2\dd \sigma
\bigg),\\
\dom(h^{\Omega,N}_\alpha)&=\bigoplus_{n=1}^M W^{1,2}(\Lambda_n)\oplus
\bigoplus_{n=1}^M W^{1,2}(\Pi_n),
\end{align*}
and denote by  $H^{\Omega,N}_\alpha$ the associated self-adjoint operator in $L^2(\Omega)$.
Clearly, the initial form  $h^{\Omega}_\alpha$ is a restriction of the form
$h^{N,\Omega}_\alpha$, and due to the max-min principle we have
\[
E_m^{\Omega,N}(\alpha)\le E_m^\Omega(\alpha),
\]
where
$E_m^{\Omega,N}(\alpha)$ is the $m$th eigenvalue of $H^{\Omega,N}_\alpha$, and
the inequality holds for those $m$ for which $E_m^\Omega(\alpha)$ exists.
On the other hand, we have the decomposition
\[
H^{\Omega,N}_\alpha= \bigoplus_{n=1}^M\big(-\Delta^N_n\big)\oplus 
\bigoplus_{n=1}^M G^N_{n,\alpha},
\]
where $(-\Delta^N_n)$ denotes the Neumann Laplacian in $L^2(\Lambda_n)$
and $G^N_{n,\alpha}$ is the self-adjoint operator
in $L^2(\Pi_n)$ generated by the sesquilinear form
\[
g^N_{n,\alpha}(u,u)=\iint_{\Pi_n} |\nabla u|^2\dd x -\alpha\int_{L_n}|u|^2 \dd \sigma,
\quad \dom (g^N_{n,\alpha})= W^{1,2}(\Pi_n).
\]
There holds $U_n G^N_{n,\alpha} U_n^*= N_n \otimes 1 + 1\otimes T_\alpha$,
where $N_n$ is the operator $f\mapsto -f''$ on $(0,\ell_n)$
with the Neumann boundary condition viewed as a self-adjoint operator
in the Hilbert space $L^2(0,\ell_n)$, $n\in\{1,\dots,M\}$.
The operators $(-\Delta^N_n)$ are non-negative, and we have
$\spec_\text{ess} H^{\Omega,N}_\alpha=[0,+\infty)$
and  $E_m^{\Omega,N}(\alpha)=-\alpha^2+\mu^N_m$,
where $\mu^N_m$ is the $m$th eigenvalue of the operator
$N_1\oplus\dots\oplus N_M$. Thus we obtain the minorations
\begin{equation}
   \label{eq-minor2}
	H^\Omega_\alpha\ge -\alpha^2 \text{ and }
	E_m^\Omega(\alpha)\ge -\alpha^2+\mu^N_m,
	\end{equation}
which holds for any fixed $m$ as $\alpha$ tends to $+\infty$.
By combining the inequalities \eqref{eq-minor} and \eqref{eq-minor2} we obtain also the rough estimate
\begin{equation}
      \label{eq-rough}
E_m^\Omega(\alpha)=-\alpha^2+\cO(1) \quad
\text{for any fixed $m$ and for $\alpha$ tending to $+\infty$.}
\end{equation}

\subsection{Construction of an identification map}
In order to conclude the proof of Theorem~\ref{thm1} we are going
to apply Proposition~\ref{prop6} to the operators
\[
B=H^\Omega_\alpha+\alpha^2, \quad B'=D_1\oplus\dots\oplus D_n,
\]
which will allow us to obtain another inequality between the quantities
\[
\lambda_m=E^\Omega_m(\alpha)+\alpha^2, \quad
\lambda'_m=\mu_m^D.
\]
Note that for any fixed $m\in\NN$
one has $\lambda_m=\cO(1)$ for  large $\alpha$, see~\eqref{eq-rough}. Therefore, it is sufficient
to construct an identification map $J=J_\alpha$ as in Proposition~\ref{prop6}
with $\delta_1+\delta_2=\cO(\alpha^{-1/2})$.
Recall that the respective forms $b$ and $b\,'$ in our case are given by
\begin{align*}
b(u,u)&=h^\Omega_\alpha(u,u)+\alpha^2\|u\|^2,\quad
\dom(b)=\dom(h^\Omega_\alpha)=W^{1,2}(\Omega),\\
b\,'(f,f)&=\sum_{n=1}^M \int_0^{\ell_n}\big|f'_n(t)\big|^2\dd t,
\quad
\dom (b\,')=\big\{f=(f_1,\dots,f_M): f_n \in W^{1,2}_0(0,\ell_n)\big\}.
\end{align*}
Here and below, by $\|u\|$ we mean the usual norm in $L^2(\Omega)$.
The positivity of~$b\,'$ is obvious, and the positivity of $b$ follows from~\eqref{eq-minor2}.

Consider the maps
\[
P_{n,\alpha}: W^{1,2}(\Pi_n)\to L^2(0,\ell_n),
\quad
(P_{n,\alpha} u)(t)=\int_0^\infty \varphi_\alpha(s) u\big(\Theta_n(t,s)\big)\dd s,
\quad n\in\{1,\dots,M\}.
\]
If $u\in W^{1,2}(\Omega)$, then
 $u\in W^{1,2}(\Pi_n)$ for any $n\in\{1,\dots,M\}$, and one can
estimate, using the Cauchy-Schwarz inequality,
\[
\begin{aligned}
\big|(P_{n,\alpha} u)(0)\big|^2+\big|(P_{n,\alpha} u)(\ell_n)\big|^2&
\le \int_0^\infty \big|u\big(\Theta_n(0,s)\big)\big|^2\dd s
+
 \int_0^\infty \big|u\big(\Theta_n(\ell_n,s)\big)\big|^2\dd s\\
&=\int_{S^1_n} |u|^2 \dd\sigma+\int_{S^2_{n}} |u|^2 \dd\sigma.
\end{aligned}
\]
As $S^2_{n-1}\mathop{\cup} S^1_n=\partial \Lambda_n$, we can use
 Lemma~\ref{lem4} with $\varepsilon=\alpha^{-1}$, which gives
\begin{multline}
           \label{eq-nce2}
\sum_{n=1}^M \Big(\big|(P_{n,\alpha} u)(0)\big|^2+\big|(P_{n,\alpha} u)(\ell_n)\big|^2\Big)
\le \sum_{n=1}^M
\Big(
\int_{S^1_n} |u|^2 \dd\sigma+\int_{S^2_{n}} |u|^2 \dd\sigma
\Big)
\\
= \sum_{n=1}^M \int_{\partial \Lambda_n} |u|^2\dd\sigma
\le \dfrac{C}{\alpha} \sum_{n=1}^M  \bigg(\iint_{\Lambda_n} |\nabla u|^2\dd x
+\alpha^2\iint_{\Lambda_n} |u|^2\dd x \bigg).
\end{multline}

For each $n\in\{1,\dots,M\}$ introduce a map
\[
\pi_n:(0,\ell_n)\to\{0,\ell_n\},
\quad
\pi_n(t)=0 \text{ for } t< \dfrac{\ell_n}{2},\quad
\pi_n(t)=\ell_n \text{ otherwise,}
\]
and pick a function
$\rho_n\in C^\infty\big([0,\ell_n]\big)$
with $\rho_n(0)=\rho_n(\ell_n)=1$ and $\rho_n\Big(\dfrac{\ell_n}{2}\Big)=0$.

Finally, define
\[
J_\alpha:W^{1,2}(\Omega)\to \bigoplus_{n=1}^M L^2(0,\ell_n),
\quad
(J_\alpha u)_n(t)=(P_{n,\alpha} u) (t)- (P_{n,\alpha} u) \big(\pi_n(t)\big)\rho_n(t).
\]
We remark that $(J_\alpha u)_n \in W^{1,2}_0(0,\ell_n)$ for any $u\in W^{1,2}(\Omega)$
and $n\in\{1,\dots,M\}$, i.e. $J_\alpha$ maps $\dom(b)$
into $\dom(b\,')$ and will be used as an identification map.

\subsection{Estimates for the identification map}

Take any $\delta>0$. Using the inequality
\[
(a_1+a_2)^2\ge (1-\delta) a_1^2 -\dfrac{1}{\delta}\, a_2^2,
\quad a_1,a_2\ge0,
\]
we estimate
\begin{align*}
\|u\|^2-\|J_\alpha u\|^2
=\,&\sum_{n=1}^M  \iint_{\Lambda_n} |u|^2\dd x\\
&\qquad+\sum_{n=1}^M
\Big(\iint_{\Pi_n}|u|^2\dd x
-\int_{0}^{\ell_n}
\Big|
\big(P_{n,\alpha}u\big)(t) - \big(P_{n,\alpha}u\big)\big(\pi(t)\big)\rho(t)
\Big|^2\dd t
\Big)\\
\le\, & \sum_{n=1}^M  \iint_{\Lambda_n} |u|^2\dd x
+\sum_{n=1}^M
\bigg(\iint_{\Pi_n}|u|^2\dd x
-(1-\delta)\int_{0}^{\ell_n}
\big|\big(P_{n,\alpha}u\big)(t)\big|^2\dd t\\
&\qquad+\dfrac{1}{\delta}
\int_{0}^{\ell_n}
\big|\big(P_{n,\alpha}u\big)\big(\pi(t)\big)\rho(t)\big|^2\dd t
\bigg)\\
=\, &
\sum_{n=1}^M  \iint_{\Lambda_n} |u|^2\dd x
+
\sum_{n=1}^M
\bigg(
\iint_{\Pi_n} |u|^2\dd x
-\int_0^{\ell_n}
\big|\big(P_{n,\alpha}u\big)(t)\big|^2\dd t
\bigg)\\
&\qquad
+\delta \sum_{n=1}^M  \int_{0}^{\ell_n}
\big|\big(P_{n,\alpha}u\big)(t)\big|^2\dd t+\dfrac{1}{\delta}
\sum_{n=1}^M 
\int_{0}^{\ell_n}
\Big|\big(P_{n,\alpha}u\big)\big(\pi(t)\big)\rho_n(t)\Big|^2\dd t\\
&=:I_1+I_2+I_3+I_4.
\end{align*}
We have the trivial inequality
\[
I_1\le \dfrac{1}{\alpha^2}\sum_{n=1}^M\Big( \iint_{\Lambda_n} |\nabla u|^2\dd x+\alpha^2\iint_{\Lambda_n} |u|^2\dd x\Big).
\]
To estimate the term $I_2$ we use Lemma~\ref{lem2} and then~\eqref{eq-pin}:
\begin{align*}
I_2 
&=
\sum_{n=1}^M  \int_0^{\ell_n}
\Big(
\int_0^\infty \big|u\big(\Theta_n(t,s)\big)\big|^2\dd s
-\Big|\int_0^\infty
\varphi_\alpha(s) u\big(\Theta_n(t,s)\big)\dd s
\Big|^2
\Big)\dd t\\
&\le \dfrac{1}{\alpha^2}
\sum_{n=1}^M \int_0^{\ell_n}
\Big(
\int_0^\infty \Big| \dfrac{\partial }{\partial s}u\big(\Theta_n(t,s)\big)\Big|^2\dd s
-\alpha \big|u\big(\Theta_n(t,0)\big)\big|^2\\
&\qquad+\alpha^2 \int_0^\infty \big|u\big(\Theta_n(t,s)\big)\big|^2\dd s
\Big)\dd t\\
&\le \dfrac{1}{\alpha^2}\sum_{n=1}^M\Big( \iint_{\Pi_n} |\nabla u|^2\dd x -\int_{L_n}|u|^2\dd \sigma + \alpha^2\iint_{\Pi_n} |u|^2\dd x \Big),
\end{align*}
which gives
\[
I_1+I_2\le\dfrac{1}{\alpha^2}\Big(
h^\Omega_\alpha(u,u)+\alpha^2\|u\|^2
\Big).
\]
Furthermore, with the help of the Cauchy-Schwarz inequality we have
\[
I_3\le
\delta\sum_{n=1}^M \int_{0}^{\ell_n}\int_0^\infty \Big|u\big(\Theta_n(t,s)\big)\Big|^2\dd s \,\dd t
=\delta\sum_{n=1}^M \iint_{\Pi_n} |u|^2\dd x\le \delta \|u\|^2,
\]
To estimate the last term $I_4$ we introduce the constant
\[
R:=\max \Big\{\int_{0}^{\ell_n}\big|\rho_n(t)\big|^2\dd t: \, n\in\{1,\dots,M\}\Big\},
\]
then, using first the estimate~\eqref{eq-nce2} and then the inequality~\eqref{eq-pin},
\begin{align*}
I_4 &\le \dfrac{R}{\delta} \sum_{n=1}^M \sup_{t\in(0,\ell_n)}
 \Big|\big(P_{n,\alpha} u \big)\big(\pi_n(t)\big)\Big|^2\\
&\le \dfrac{R}{\delta}
\sum_{n=1}^M \Big(\big|(P_{n,\alpha} u)(0)\big|^2+\big|(P_{n,\alpha} u)(\ell_n)\big|^2\Big)\\
&\le \dfrac{RC}{\delta \alpha} \sum_{n=1}^M \bigg(\iint_{\Lambda_n} |\nabla u|^2\dd x
+\alpha^2\iint_{\Lambda_n} |u|^2\dd x \bigg)\\
&\le\dfrac{RC}{\delta \alpha} \bigg[\sum_{n=1}^M \Big(\iint_{\Lambda_n} |\nabla u|^2\dd x
+\alpha^2\iint_{\Lambda_n} |u|^2\dd x\Big)\\
&\qquad +\sum_{n=1}^M\Big(
\iint_{\Pi_n} |\nabla u|^2\dd x -\int_{L_n}|u|^2\dd \sigma + \alpha^2\iint_{\Pi_n} |u|^2\dd x
\Big)
\bigg]\\
&=\dfrac{RC}{\delta\alpha}\Big(h^\Omega_\alpha(u,u)+\alpha^2\|u\|\Big).
\end{align*}
Choosing $\delta=\alpha^{-1/2}$ and summing up the four terms we see that
\[
\|u\|^2-\|J_\alpha u\|^2\le
\dfrac{c_1}{\sqrt{\alpha}}\Big(h^\Omega_\alpha(u,u)+\alpha^2\|u\|^2 +\|u\|^2\Big)
\equiv
\dfrac{c_1}{\sqrt{\alpha}}\Big(b(u,u) +\|u\|^2\Big).
\]
with a suitable constant $c_1>0$.

Now we need to compare $b\,'(J_\alpha u,J_\alpha u)$ and $b(u,u)$.
Take $\delta\in(0,1)$ and use the inequality
\[
(a_1+a_2)^2\le (1+\delta)a_1^2+ \dfrac{2}{\delta}\, a_2^2, \quad a_1,a_2\ge 0, 
\]
then
\begin{multline}
          \label{eq-forms}
b\,'(J_\alpha u,J_\alpha u)- b(u,u) =\sum_{n=1}^M \int_0^{\ell_n}
\Big|(P_{n,\alpha}u)' - \rho\,'_n \big[(P_{n,\alpha} u)\circ \pi_n\big]\Big|^2\dd t
- \Big(
h^\Omega_\alpha(u,u)+\alpha^2\|u\|^2
\Big)\\
\begin{aligned}
\le&
(1+\delta)\sum_{n=1}^M \int_0^{\ell_n}
\Big|(P_{n,\alpha}u)'\Big|^2\dd t+\dfrac{2}{\delta}
\sum_{n=1}^M \int_0^{\ell_n}
\Big|\rho\,'_n \big[(P_{n,\alpha} u)\circ \pi_n\big]\Big|^2\dd t\\
&\quad{}-\sum_{n=1}^M \bigg(
\iint_{\Lambda_n} |\nabla u|^2\dd x+\alpha^2\iint_{\Lambda_n} |u|^2\dd x
\bigg)\\
&\quad{}-\sum_{n=1}^M\Big(
\iint_{\Pi_n} |\nabla u|^2\dd x -\int_{L_n}|u|^2\dd \sigma + \alpha^2\iint_{\Pi_n} |u|^2\dd x
\Big)\\
&\le
(1+\delta)\sum_{n=1}^M \int_0^{\ell_n}
\Big|(P_{n,\alpha}u)'\Big|^2\dd t
+\dfrac{2}{\delta}
\sum_{n=1}^M \int_0^{\ell_n}
\Big|\rho\,'_n \big[(P_{n,\alpha} u)\circ \pi_n\big]\Big|^2\dd t\\
&\quad{}-\sum_{n=1}^M\Big(
\iint_{\Pi_n} |\nabla u|^2\dd x -\int_{L_n}|u|^2\dd \sigma + \alpha^2\iint_{\Pi_n} |u|^2\dd x
\Big).
\end{aligned}
\end{multline}
Using first the Cauchy-Schwarz inequality and then the inequality~\eqref{eq-pin}
we have
\begin{multline*}
\int_0^{\ell_n}\big|(P_{n,\alpha}u)'\big|^2\dd t
\le \int_0^{\ell_n}\int_0^\infty \Big|
\dfrac{\partial}{\partial t} u\big(\Theta_n(t,s)\big)\Big|^2 \dd s\, \dd t\\
\le
\iint_{\Pi_n} |\nabla u|^2\dd x -\int_{L_n}|u|^2\dd \sigma + \alpha^2\iint_{\Pi_n} |u|^2\dd x.
\end{multline*}
Substituting the last inequality into \eqref{eq-forms} we arrive at
\begin{equation}
    \label{eq-loc2}
\begin{aligned}		
b\,'(J_\alpha u,J_\alpha u)- b(u,u)&\le \delta
\sum_{n=1}^M \Big(
\iint_{\Pi_n} |\nabla u|^2\dd x -\int_{L_n}|u|^2\dd \sigma + \alpha^2\iint_{\Pi_n} |u|^2\dd x
\Big)\\
&\quad+\dfrac{2}{\delta}
\sum_{n=1}^M \int_0^{\ell_n}
\Big|\rho'_n \big[(P_{n,\alpha} u)\circ \pi_n\big]\Big|^2\dd t. 
\end{aligned}
\end{equation}
Furthermore, using the constant
\[
R\,':=\max \Big\{\int_{0}^{\ell_n}\big|\rho'_n(t)\big|^2\dd t: \, n\in\{1,\dots,M\}\Big\},
\]
and the inequality \eqref{eq-nce2} we have
\[
\begin{aligned}
\sum_{n=1}^M \int_0^{\ell_n}
\Big|\rho'_n \big[(P_{n,\alpha} u)\circ \pi_n\big]\Big|^2\dd t&\le
R\,'\sum_{n=1}^M \sup_{t\in(0,\ell_n)} \big|(P_{n,\alpha} u) \big(\pi_n(t)\big)\big|^2\\
&\le
R\,'\sum_{n=1}^M \Big(\big|(P_{n,\alpha} u)(0)\big|^2+\big|(P_{n,\alpha} u)(\ell_n)\big|^2\Big)\\
&\le 
\dfrac{R\,' C}{\alpha}\sum_{n=1}^M \bigg(\iint_{\Lambda_n} |\nabla u|^2\dd x
+\alpha^2\iint_{\Lambda_n} |u|^2\dd x \bigg).
\end{aligned}
\]
The substitution of this inequality into~\eqref{eq-loc2}
and the choice $\delta=\alpha^{-1/2}$ lead then to
\[
b\,'(Ju,Ju)- b(u,u)
\le \dfrac{c_2}{\sqrt\alpha} \Big(h^\Omega_\alpha(u,u)+\alpha^2\|u\|^2\Big)
\le \dfrac{c_2}{\sqrt\alpha} \Big(b(u,u)+\|u\|^2\Big).
\]
with a suitable constant $c_2>0$. By Proposition~\ref{prop6}, for any fixed $m\in\NN$
and for large $\alpha$ we have the estimate
$\mu^D_m\le E^\Omega_m(\alpha) +\alpha^2 +\cO(\alpha^{-1/2})$.
The combination with~\eqref{eq-minor} gives the result.

\section*{Acknowledgments}
The work was partially supported by ANR NOSEVOL (ANR 2011 BS01019 01)
and GDR Dynamique quantique (GDR CNRS 2279 DYNQUA).

\end{document}